\theoremstyle{plain}
\newtheorem{thm}{Theorem}[section]
\newtheorem{lem}[thm]{Lemma}
\newtheorem{cor}[thm]{Corollary}
\theoremstyle{definition}
\newtheorem{rem}[thm]{Remark}
\newtheorem{ex}[thm]{Example}
\numberwithin{equation}{section}
\newtoks\by
\newtoks\paper
\newtoks\book
\newtoks\jour
\newtoks\yr
\newtoks\pages
\newtoks\vol
\newtoks\publ
\def\ota{{\hbox\vol{???}}}
\def\cLear{\by=\ota\paper=\ota\book=\ota\jour=\ota\yr=\ota
\pages=\ota\vol=\ota\publ=\ota}
\def\endpaper{\the\by, \textit{\the\paper},
{\the\jour} \textbf{\the\vol} (\the\yr), \the\pages.\cLear}
\def\endbook{\the\by, \textit{\the\book}, \the\publ.\cLear}
\def\endprep{\the\by, \textit{\the\paper}, \the\jour.\cLear}
\def\loc{\operatorname{loc}}
\def\R{\mathbb R}
\def\S{\mathbb S}
\def\rn{\R^n}
\def\a{\alpha}
\def\la{\lambda}
\def\vp{\varphi}
\def\i{\infty}
\def\d{\delta}
\def\ls{\lesssim}
\def\R{\mathbb R}
\def\d{\delta}
\def\e{\varepsilon}
\def\Lloc{L_1^{\rm loc}(\rn)}
\def\B{\operatorname{BMO}}
\begin{document}

\title{A note on maximal commutators and commutators of maximal
functions}
\author{Mujdat \textsc{Agcayazi}, Amiran \textsc{Gogatishvili}, Kerim \textsc{Koca}, Rza \textsc{Mustafayev}}
\thanks{The research  of A. Gogatishvili was partialy
supported by the grant 201/08/0383 and 13-14743S of the Grant Agency of the
Czech Republic and RVO: 67985840. The research of R.Ch. Mustafayev
was supported by the Science Development Foundation under the
President of the Republic of Azerbaijan Project No.
EIF-2010-1(1)-40/06-1}

\subjclass[2000]{42B20, 42B25, 42B35}

\keywords{ Maximal operator, commutator, BMO}


\begin{abstract}
In this paper maximal commutators and commutators of maximal
functions with functions of bounded mean oscillation are
investigated. New pointwise estimates for these operators are
proved.
\end{abstract}
\maketitle
\section{Introduction }

Given a locally integrable function $f$ on $\rn$, the
Hardy-Littlewood maximal function $Mf$ of $f$ is defined by
$$
Mf(x):=\sup_{Q \ni x} \frac{1}{|Q|}\int_Q |f(y)|\,dy, \qquad
(x\in\rn),
$$
where the supremum is taken over all cubes $Q$ containing $x$. The
operator $M:~f \rightarrow Mf$ is called the Hardy-Littlewood
maximal operator.

For any $f\in\Lloc$ and $x \in \rn$, let $M^{\#} f$ be the sharp
maximal function of Fefferman-Stein defined by
\begin{equation*}
M^{\#}f(x) = \sup_{Q\ni x}\frac{1}{|Q|}\int_Q |f(y)-f_Q|dy,
\end{equation*}
where the supremum extends over all cubes containing $x$, and
$f_Q$ is the mean value of $f$ on $Q$. For a fixed $\d \in (0,1)$,
any suitable function $g$ and $x\in\rn$, let $M_{\d}^{\#} g (x) =
[M^{\#} (|g|^{\d})(x)]^{1/{\d}}$ and $M_{\d}g (x) = [M
(|g|^{\d})(x)]^{1/{\d}}$.

Let $f\in\Lloc$. Then $f$ is said to have bounded mean oscillation
($f\in \B$) if the seminorm given by
\begin{equation*}
\|f\|_{*}:= \sup_Q\frac{1}{|Q|}\int_Q |f(y)-f_Q|dy
\end{equation*}
is finite.

Let $T$ be the Calder\'on-Zygmund singular integral operator
$$
Tf(x) : = \mbox{p.v.} \int_{\rn} K(x-y)f(y)\,dy
$$
with kernel $K(x) = \Omega (x) / |x|^n$, where $\Omega$ is
homogeneous of degree zero, infinitely differentiable on the unit
sphere $\S^{n-1}$, and $\int_{\S^{n-1}} \Omega = 0$.

The well-known result of Coifman, Rochberg, and Weiss \cite{CRW}
states that if $b \in \B(\rn)$, then $[T,b]$ defined initially for
$f\in L_c^{\i}(\rn)$, by
\begin{equation}\label{CRWoperator}
[T,b](f) = T(bf) - bT(f),
\end{equation}
is bounded on $L^p(\rn)$, $1 < p < \i$; conversely, if $[R_i,b]$ is
bounded on $L^p(\rn)$ for every Riesz transform $R_i$, then $b \in
\B(\rn)$. Janson \cite{j} observed that actually for any singular
integral $T$ (with kernel satisfying the above-mentioned conditions)
the boundedness of $[T,b]$ on $L^p(\rn)$ implies $b \in \B(\rn)$.


Unlike the classical theory of singular integral operators, simple
examples show that $[T,b]$ fails to be of weak-type (1,1) when $b
\in \B$, and satisfies weak-type $L(\log L)$ inequality (see
\cite{CPer}).

It is well-known that the maximal commutator
$$
C_b(f)(x):=\sup_{x\in Q}\frac{1}{|Q|}\int_{Q} |b(x)-b(y)||f(y)|dy,
\qquad (x\in\rn),
$$
plays an important role in the study of commutators of singular
integral operators with $\B$ symbols (see, for instance,
\cite{GHST}, \cite{LiHuShi}, \cite{ST1}, \cite{ST2}).
Garcia-Cuerva et al. \cite{GHST} proved that $C_b$ is bounded in
$L_p(\rn)$ for any $p\in (1,\i)$ if and only if $b\in\B(\rn)$, and
Alphonse \cite{Alphonse} proved that $C_b$ enjoys weak-type
$L(\log L)$ estimate, that is, inequality
\begin{equation}\label{eq0003.7}
|\{x \in\rn :  C_b(f)(x)> \la \}| \le C
\int_{\rn}\frac{|f(x)|}{\la}\left(1+\log^+
\left(\frac{|f(x)|}{\la}\right)\right)dx
\end{equation}
holds. The maximal operator $C_b$ was studied intensively and
there exist plenty of results about it.

The commutator of the Hardy-Littlewood maximal operator $M$ and a
locally integrable function $b$ is defined by
\begin{equation*}
[M,b]f = M(bf)-bMf.
\end{equation*}
The operator $[M,b]$ was studied by Milman et al. in
\cite{MilSchon} and \cite{BasMilRu}. This operator arise, for
example, when trying to give a meaning to the product of a
function in $H^1$ (which may not be a locally integrable function)
and a function in $\B$ (see, for instance, \cite{bijz}). Using
real interpolation techniques, in \cite{MilSchon}, Milman and
Schonbek proved the $L_p$-boundedness of the operator $[M,b]$. In
\cite{BasMilRu}, Bastero, Milman and Ruiz proved that the
commutator of the maximal operator with locally integrable
function $b$ is bounded in $L_p$ if and only if $b$ is in $\B$
with bounded negative part. As we know only these two papers are
devoted to the problem of boundedness of the commutator of maximal
function in Lebesgue spaces.

In this paper the maximal commutator and the commutator of the
maximal function with functions of bounded mean oscillation are
investigated. Although  operators $C_b$ and $[M,b]$ essentially
differ from each other (for example, $C_b$ is a positive and a
sublinear operator, but $[M,b]$ is neither a positive nor a
sublinear), but if $b$ satisfies some additional conditions, then
operator $C_b$ controls $[M,b]$. If $b$ is a non-negative locally
integrable function, then the following inequality
\begin{equation*}
|[M,b]f|(x)\leq C_b(f)(x) \qquad (x\in\rn)
\end{equation*}
holds. If $b$ is any locally integrable function on $\rn$, then the
estimation
\begin{equation}\label{eq.0001}
|[M,b]f|(x)\leq C_b(f)(x)+ 2b^-(x) Mf(x) \qquad (x\in\rn)
\end{equation}
is true.\footnote{Denote by $b^+(x)=\max\{b(x),0\}$ and
$b^-(x)=-\min\{b(x),0\}$, consequently $b=b^+-b^-$ and
$|b|=b^++b^-$.}

Now we state our main results.
\begin{thm}\label{main}
Let $b\in \B$ and let $0<\d  <1$. Then, there exists a positive
constant $C=C_{\d}$ such that
\begin{equation}\label{eqmain2}
M_{\d}(C_b(f))(x)\leq C \|b\|_{*} M^2f(x) \qquad (x\in\rn)
\end{equation}
for all functions from $L_1^{\loc}(\rn)$.
\end{thm}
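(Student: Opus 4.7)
I would prove \eqref{eqmain2} by fixing $x\in\rn$ and an arbitrary cube $Q\ni x$, and bounding the averaged quantity
\[
\Big(\frac{1}{|Q|}\int_Q (C_b f(y))^{\delta}\,dy\Big)^{1/\delta}
\]
by $C\|b\|_{*}M^{2}f(x)$ with $C$ independent of $Q$. Split $f=f_1+f_2$ with $f_1=f\chi_{3Q}$. Since $0<\delta<1$, sublinearity of $C_b$ and the elementary inequality $(a+b)^{\delta}\le a^{\delta}+b^{\delta}$ reduce the task to controlling the $L^{\delta}$-averages of $C_b f_1$ and of $C_b f_2$ over $Q$ separately.

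\emph{Local part.} For $C_b f_1$, I would combine the layer-cake formula
\[
\frac{1}{|Q|}\int_Q (C_b f_1)^{\delta}\,dy=\delta\int_0^{\infty}\lambda^{\delta-1}\frac{|\{y\in Q: C_b f_1(y)>\lambda\}|}{|Q|}\,d\lambda
\]
with the weak-type $L\log L$ inequality \eqref{eq0003.7} for $C_b$, balancing the trivial estimate $|\{y\in Q: C_b f_1(y)>\lambda\}|\le|Q|$ against \eqref{eq0003.7} at the threshold $\lambda\sim\|b\|_{*}\|f\|_{L\log L,3Q}$. A Kolmogorov-type calculation, tracking the $\|b\|_{*}$-dependence via the homogeneity $C_b=\|b\|_{*}C_{b/\|b\|_*}$, then yields
\[
\Big(\tfrac{1}{|Q|}\int_Q (C_b f_1)^{\delta}\Big)^{1/\delta}\le C\|b\|_{*}\|f\|_{L\log L,3Q}\le C\|b\|_{*}M^{2}f(x),
\]
the last step using the well-known equivalence $M_{L\log L}f(x)\simeq M^{2}f(x)$ of the $L\log L$ Orlicz maximal function with the iterated Hardy--Littlewood one.

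\emph{Global part.} For $C_b f_2$ I would argue pointwise. If $y\in Q$ and $R\ni y$ satisfies $R\cap(3Q)^{c}\ne\emptyset$, then $\ell(R)\gtrsim\ell(Q)$, so $R$ is contained in a cube $\widetilde R$ centered at $x$ of comparable measure. Inserting $|b(y)-b(z)|\le|b(y)-b_{3Q}|+|b_{3Q}-b_{\widetilde R}|+|b_{\widetilde R}-b(z)|$ into the average over $R$ and applying Orlicz--H\"older ($\exp L$ paired with $L\log L$) together with the John--Nirenberg estimates
\[
\|b-b_{\widetilde R}\|_{\exp L,\widetilde R}\le C\|b\|_{*},\qquad |b_{3Q}-b_{\widetilde R}|\lesssim\|b\|_{*}\Big(1+\log\tfrac{\ell(\widetilde R)}{\ell(Q)}\Big),
\]
the logarithmic factor gets absorbed against $\|f\|_{L\log L,\widetilde R}\le CM^{2}f(x)$, and passing to the supremum over $R$ gives
\[
C_b f_2(y)\le C\|b\|_{*}M^{2}f(x)+|b(y)-b_{3Q}|\,Mf(x).
\]
Taking $L^{\delta}(Q,dy/|Q|)$-norms and applying H\"older's inequality with exponents $1/\delta$ and $1/(1-\delta)$, John--Nirenberg's $\bigl(\tfrac{1}{|Q|}\int_Q|b-b_{3Q}|^{r}\bigr)^{1/r}\le C_{r}\|b\|_{*}$ together with $\tfrac{1}{|Q|}\int_Q Mf\le M^{2}f(x)$ disposes of the remaining $|b(y)-b_{3Q}|Mf(x)$ contribution, completing the bound.

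The main technical obstacle is the pointwise estimate for $C_b f_2$: absorbing the BMO-gap $|b_{3Q}-b_{\widetilde R}|$ against the Luxemburg $L\log L$-norm of $f$ is delicate and is precisely the reason why $M^{2}f$, rather than $Mf$, must appear on the right-hand side of \eqref{eqmain2}.
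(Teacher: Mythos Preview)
Your main gap is a circularity in the local part. You invoke \eqref{eq0003.7}, the endpoint $L\log L$ estimate for $C_b$, to control $\bigl(\frac{1}{|Q|}\int_Q(C_bf_1)^\delta\bigr)^{1/\delta}$. But one of the explicit purposes of Theorem~\ref{main} in this paper is to give a \emph{new} proof of \eqref{eq0003.7}: Theorem~\ref{main} yields Theorem~\ref{lem1111111}, which combined with Lemma~\ref{lem002.8} reproves \eqref{eq0003.7} in Section~\ref{sect4}. Assuming \eqref{eq0003.7} at the outset therefore makes the argument circular within the paper's logical structure, and strips the inequality \eqref{eqmain2} of most of its content. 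The paper avoids this by a different decomposition: rather than splitting $C_bf\le C_bf_1+C_bf_2$, it writes
\[
C_bf(y)\le M\bigl((b-b_{3Q})f_1\bigr)(y)+M\bigl((b-b_{3Q})f_2\bigr)(y)+|b(y)-b_{3Q}|\,Mf(y),
\]
so the local piece is an ordinary Hardy--Littlewood maximal function. A Kolmogorov argument using only the weak-$(1,1)$ bound for $M$ then gives $\mathrm{I}\lesssim\frac{1}{|3Q|}\int_{3Q}|b-b_{3Q}||f|$, and a single Orlicz--H\"older inequality plus $\|b-b_{3Q}\|_{\exp L,3Q}\lesssim\|b\|_*$ finishes that term without any appeal to $C_b$'s own mapping properties.

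Your global estimate also has an unjustified step. After the three-term splitting of $|b(y)-b(z)|$, the middle contribution is
\[
|b_{3Q}-b_{\widetilde R}|\cdot\frac{1}{|R|}\int_R|f_2|\ \lesssim\ \|b\|_*\Bigl(1+\log\frac{\ell(\widetilde R)}{\ell(Q)}\Bigr)\cdot\frac{1}{|\widetilde R|}\int_{\widetilde R}|f|,
\]
and you claim the logarithm is ``absorbed against $\|f\|_{L\log L,\widetilde R}$''. That is not true in general: if $f=\chi_{\widetilde R}$ then both $\frac{1}{|\widetilde R|}\int_{\widetilde R}|f|$ and $\|f\|_{L\log L,\widetilde R}$ equal $1$, while $\log(\ell(\widetilde R)/\ell(Q))$ is unbounded as $Q$ shrinks. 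The logarithm carries information about the ratio of scales $Q$ and $\widetilde R$, whereas $\|f\|_{L\log L,\widetilde R}$ knows nothing about $Q$; there is no mechanism for one to absorb the other. The paper's decomposition sidesteps this by never comparing $b_{3Q}$ with $b_{\widetilde R}$ directly: the far term is $M((b-b_{3Q})f_2)$, which is essentially constant on $Q$ and hence comparable to its value at $x$, after which the Orlicz pairing is used without introducing the cross-scale logarithm.
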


Inequality \eqref{eqmain2} improves known inequality
\begin{equation*}
M_{\d}^{\#} (C_b(f))(x)\ls \|b\|_{*}M^2f(x),
\end{equation*}
which involves the Fefferman-Stein sharp maximal function (see
\cite{HuYang}, Lemma 1, for instance). Indeed, since $M_{\d}^{\#}
\ls M_{\d}$, then
$$
M_{\d}^{\#} (C_b(f))(x)\lesssim M_{\d} (C_b(f))(x)\leq
C\|b\|_{*}M^2f(x) \qquad (x\in\rn).
$$

Since, by the Lebesgue differentiation theorem
$$
C_b(f)(x) \leq M_{\d}(C_b(f))(x),
$$
the following statement follows from Theorem \ref{main}.
\begin{thm}\label{lem1111111}
Let $b\in \B$. Then, there exists a positive constant $C$ such that
\begin{equation}\label{eq.0002}
C_b(f)(x)\leq C \|b\|_{*} M^2f(x) \qquad (x\in\rn)
\end{equation}
for all functions from $L_1^{\loc}(\rn)$.
\end{thm}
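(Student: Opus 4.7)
The plan is to deduce this statement directly from Theorem \ref{main}, following the hint immediately preceding the theorem: combine the pointwise inequality $C_b(f)(x) \le M_{\delta}(C_b(f))(x)$, which holds almost everywhere for any fixed $\delta \in (0,1)$, with the bound $M_{\delta}(C_b(f))(x) \le C\|b\|_{*} M^2 f(x)$ established in Theorem \ref{main}. Once these two ingredients are in hand, the conclusion is immediate.

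To justify the pointwise inequality, I would fix $\delta \in (0,1)$ and apply the Lebesgue differentiation theorem to the function $|C_b(f)|^{\delta}$. At each of its Lebesgue points $x$ one has
\begin{equation*}
|C_b(f)(x)|^{\delta} = \lim_{r\to 0}\frac{1}{|B(x,r)|}\int_{B(x,r)}|C_b(f)(y)|^{\delta}\,dy \le M(|C_b(f)|^{\delta})(x),
\end{equation*}
and raising both sides to the power $1/\delta$ yields $C_b(f)(x) \le M_{\delta}(C_b(f))(x)$.

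The only subtlety — and really the only place where any thought is needed — is in justifying the applicability of Lebesgue differentiation, i.e.\ that $|C_b(f)|^{\delta}$ is locally integrable on the set where the target inequality is non-trivial. Theorem \ref{main} itself disposes of this point: on $\{x : M^2 f(x) < \infty\}$ it yields $M_{\delta}(C_b(f))(x) < \infty$, which forces $|C_b(f)|^{\delta}$ to be locally integrable in a neighborhood of each such $x$, so that almost every such point is a Lebesgue point. On the complementary set the desired bound holds trivially since the right-hand side is infinite.

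Putting the two ingredients together gives
\begin{equation*}
C_b(f)(x) \le M_{\delta}(C_b(f))(x) \le C\|b\|_{*}\, M^2 f(x)
\end{equation*}
for almost every $x \in \rn$, which is the assertion of Theorem \ref{lem1111111}. I do not anticipate any genuine obstacle beyond the local-integrability observation above, since all the substantive work has already been carried out in proving Theorem \ref{main}.
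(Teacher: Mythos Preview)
Your proposal is correct and follows exactly the paper's approach: the paper derives Theorem~\ref{lem1111111} from Theorem~\ref{main} in one line, invoking the Lebesgue differentiation theorem to get $C_b(f)(x)\le M_{\delta}(C_b(f))(x)$ and then applying \eqref{eqmain2}. Your treatment of the local-integrability issue (needed to justify Lebesgue differentiation for $|C_b(f)|^{\delta}$) is more careful than the paper, which simply asserts the inequality without addressing this point.
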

Using the boundedness of maximal operator $M$ in $L_p(\rn)$, $1 <
p < \i$, in view of inequality \eqref{eq.0002}, we obtain new
proof of sufficient part of above-mentioned result by
Garcia-Cuerva et al. from \cite{GHST}. Moreover, inequality
\eqref{eq.0002} allows us to obtain another proof of endpoint
estimate for the operator $C_b$ from \cite{Alphonse}. We
strengthen this theorem by showing that weak endpoint estimate for
$C_b$ implies $b\in\B$ (Note that in \cite{Alphonse} only
sufficient part of Theorem \ref{mainthm} was proved). On the other
hand, inequalities \eqref{eq.0001} and \eqref{eq.0002} give new
more easy (in our opinion) proof of aforesaid results by Milman et
al. in \cite{MilSchon} and \cite{BasMilRu}.

On using inequalities \eqref{eq.0001} and \eqref{eq.0002}, we
consider weak-type estimate for the commutator of the maximal
function as well, and prove the estimate
\begin{equation*}
|\{x \in\rn : |[M,b]f(x)| > \la \}| \\
\ls\int_{\rn}\frac{|f(x)|}{\la}\left(1+\log^+
\left(\frac{|f(x)|}{\la}\right)\right)dx,
\end{equation*}
which is new, as we know.

From \eqref{eq.0001} and \eqref{eq.0002} we easily get the
following statement.
\begin{thm}\label{lem0003.3}
Let $b$ is in $\B(\rn)$ such that $b^-\in L_{\i}(\rn)$. Then, there
exists a positive constant $C$ such that
\begin{equation}\label{eqPointwise3}
|[M,b]f(x)|\leq C \left(\|b^+\|_{*}+\|b^-\|_{\i}\right)M^2f(x)
\end{equation}
for all functions from $L_1^{\loc}(\rn)$.
\end{thm}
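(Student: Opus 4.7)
The plan is to read off Theorem \ref{lem0003.3} as a short consequence of the two inequalities \eqref{eq.0001} and \eqref{eq.0002}, using only the facts that $L_\infty(\rn)\subset \B(\rn)$ and that $Mf(x)\le M^2f(x)$ pointwise. The proof should take just a few lines.

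First, I would start from the pointwise bound \eqref{eq.0001},
\begin{equation*}
|[M,b]f(x)|\le C_b(f)(x)+2b^-(x)Mf(x),
\end{equation*}
and estimate the two summands separately. The second is immediate: since $b^-\in L_\infty(\rn)$ and $Mf(x)\le M^2f(x)$, one has $2b^-(x)Mf(x)\le 2\|b^-\|_\infty M^2f(x)$.

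For the first summand, the hypothesis $b\in\B(\rn)$ lets me apply Theorem \ref{lem1111111} directly to $b$, giving $C_b(f)(x)\le C\|b\|_* M^2f(x)$. The point is then to replace $\|b\|_*$ by $\|b^+\|_*+\|b^-\|_\infty$. Because $b^-$ is bounded it lies in $\B(\rn)$ with $\|b^-\|_*\le 2\|b^-\|_\infty$ (a standard one-line computation from $|b^-(x)-b^-(y)|\le 2\|b^-\|_\infty$), and then the triangle inequality for the BMO seminorm yields
\begin{equation*}
\|b\|_*=\|b^+-b^-\|_*\le \|b^+\|_*+\|b^-\|_*\le \|b^+\|_*+2\|b^-\|_\infty.
\end{equation*}

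Combining the two bounds, $|[M,b]f(x)|\le C\bigl(\|b^+\|_*+\|b^-\|_\infty\bigr)M^2f(x)$, which is exactly \eqref{eqPointwise3}. There is essentially no obstacle here; the only thing to be slightly careful about is that $b^+$ is itself in $\B(\rn)$ (which is needed to make $\|b^+\|_*$ meaningful), but this again follows from $b=b^+-b^-$, $b\in\B$ and $b^-\in L_\infty\subset\B$. All the real work has already been done in Theorem \ref{lem1111111} and in the elementary splitting \eqref{eq.0001}.
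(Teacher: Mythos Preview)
Your argument is correct and matches the paper's own proof essentially line for line: the paper also derives \eqref{eqPointwise3} from \eqref{eq.0001} together with Theorem~\ref{lem1111111}, then uses $Mf\le M^2f$ and the inequality $\|b\|_*\le \|b^+\|_*+\|b^-\|_*\lesssim \|b^+\|_*+\|b^-\|_\infty$ to reach the stated constant.
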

Inequalities \eqref{eq.0002} and \eqref{eqPointwise3} allows us to
state the boundedness of both operators in any Banach space of
measurable functions defined on $\rn$ which the Hardy-Littlewood
maximal operator lives invariant.
\begin{thm}
Let $b \in \B(\rn)$. Suppose that $X$ is a Banach space of
measurable functions defined on $\rn$. Assume that $M$ is bounded on
$X$. Then the operator $C_b$ is bounded on $X$, and the inequality
$$
\|C_b f\|_X \le C \|b\|_* \|f\|_X
$$
holds with constant $C$ independent of $f$.

Moreover, if $b^- \in L_{\i}(\rn)$, then the operator $[M,b]$ is
bounded on $X$, and the inequality
$$
\|[M,b] f\|_X \le C (\|b^+\|_* + \|b^-\|_{\i})\|f\|_X
$$
holds with constant $C$ independent of $f$.
\end{thm}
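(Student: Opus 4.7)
The plan is to reduce everything to the two pointwise estimates already established, namely
$$
C_b(f)(x) \le C\|b\|_{*}\, M^{2}f(x) \qquad \text{and} \qquad |[M,b]f(x)| \le C\bigl(\|b^{+}\|_{*}+\|b^{-}\|_{\i}\bigr) M^{2}f(x),
$$
coming from Theorem~\ref{lem1111111} and Theorem~\ref{lem0003.3} respectively, and then to apply the hypothesis that $M$ is bounded on $X$ exactly twice. The standing assumption that $X$ is a Banach space \emph{of measurable functions} on which the Hardy--Littlewood maximal operator acts is tacitly understood in the sense of a Banach lattice: if $0 \le g \le h$ almost everywhere and $h \in X$, then $g \in X$ with $\|g\|_{X} \le \|h\|_{X}$. (This monotonicity is the only extra property I will use beyond the assumption that $M \colon X \to X$ is bounded; I would flag this in a brief remark before beginning the proof.)

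For the first assertion, I would apply Theorem~\ref{lem1111111} pointwise to get $C_b(f)(x) \le C\|b\|_{*} M(Mf)(x)$, invoke lattice monotonicity of $\|\cdot\|_{X}$ to obtain
$$
\|C_b f\|_{X} \le C\|b\|_{*} \|M(Mf)\|_{X},
$$
and then apply the $X$-boundedness of $M$ twice, yielding $\|M(Mf)\|_{X} \le \|M\|_{X \to X}^{2}\, \|f\|_{X}$. Absorbing the constant $\|M\|_{X \to X}^{2}$ into $C$ finishes this half.

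For the second assertion, the argument is identical: Theorem~\ref{lem0003.3} is applicable because the additional hypothesis $b^{-} \in L_{\i}(\rn)$ is in force, so pointwise
$$
|[M,b]f(x)| \le C\bigl(\|b^{+}\|_{*}+\|b^{-}\|_{\i}\bigr) M^{2}f(x),
$$
and taking $X$-norms and using boundedness of $M$ twice gives the stated inequality. I do not foresee any genuine obstacle: both parts are transparent consequences of the pointwise control by $M^{2}f$ combined with a single hypothesis ($M$ bounded on $X$). The only point where care is needed is the implicit lattice structure of $X$, which I would either fold into the hypotheses or mention once at the start; everything else is a one-line chain of inequalities.
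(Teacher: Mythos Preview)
Your approach is correct and matches the paper's own reasoning: the theorem is stated there without a separate proof, being presented as an immediate consequence of the pointwise estimates \eqref{eq.0002} and \eqref{eqPointwise3} together with the assumed boundedness of $M$ on $X$. Your explicit remark about the implicit lattice (monotonicity) property of $X$ is a welcome clarification that the paper leaves tacit.
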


The paper is organized as follows. Section \ref{sect2} contains
some preliminaries along with the standard ingredients used in the
proof. In Section \ref{sect3} we prove Theorem \ref{main}. In
Section \ref{sect4} we apply the main result to get another proof
of endpoint weak-type estimate for operator $C_b$. In Section
\ref{sect5} we obtain pointwise estimate of$[M,b]$ by iterated
maximal function. Using this estimate we get new proof of the
$L_p$ - boundedness. Finally, in Section \ref{sect6} we prove
endpoint estimate for the commutator of the maximal function with
$\B$ functions.

\section{Notations and Preliminaries}\label{sect2}

Now we make some conventions. Throughout the paper, we always
denote by $c$ and $C$ a positive constant, which is independent of
main parameters, but it may vary from line to line. However a
constant with subscript such as $C_1$ does not change in different
occurrences. By $A\ls B$ we mean that $A\le C B$ with some
positive constant $C$ independent of appropriate quantities. If
$A\ls B$ and $B\ls A$, we write $A\approx B$ and say that $A$ and
$B$ are equivalent. For a measurable set $E$, $\chi_E$ denotes the
characteristic function of $E$. Throughout paper cubes will be
assumed to have their sides parallel to the coordinate axes. Given
$\la > 0$ and a cube $Q$, $\la Q$ denotes the cube with the same
center as $Q$ and whose side is $\la$ times that of $Q$. For a
fixed $p$ with $p\in [1,\i)$, $p'$ denotes the dual exponent of
$p$, namely, $p'=p/(p-1)$. For any measurable set $E$ and any
integrable function $f$ on $E$, we denote by $f_E$ the mean value
of $f$ over $E$, that is, $f_E=(1/|E|)\int_E f(x)dx$.

For the sake of completeness we recall the definitions and some
properties of the spaces we are going to use.

The non-increasing rearrangement (see, e.g., \cite[p. 39]{BS}) of a
measurable function $f$ on $\rn$ is defined by
$$
f^*(t)=\inf\left\{\la >0 : |\{x\in\rn: |f(x)|>\la \}| \leq t
\right\}\quad (0<t<\i).
$$
Let $p\in[1,\infty)$. The Lorentz space $L^{p,\infty}$ is defined as
$$
L^{p,\infty}(\rn):= \left\{f:\quad  f \, \text{measurable on}\, \,
\rn; \quad
\|f\|_{L^{p,\infty}(\rn)}:=\sup_{0<t<\infty}t\sp{\frac1{p}}f^*(t)<\infty\right\}.
$$

The most important result regarding $\B$ is the following theorem
of F.~John and L.~Nirenberg \cite{JN} (see, \cite{GR}, p. 164,
also).
\begin{thm}
There exists constants $C_1$, $C_2$ depending only on the dimension
$n$, such that for every $f\in \B=\B(\rn)$, every cube $Q$ and every
$t>0$:
\begin{equation}\label{eq0002.2}
|\{x\in Q: |f(x)-f_Q|>t \}|\leq C_1 |Q| \exp
\left\{-\frac{C_2}{\|f\|_*}t\right\}.
\end{equation}
\end{thm}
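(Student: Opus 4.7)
The plan is to prove the John--Nirenberg inequality by the classical Calder\'on--Zygmund stopping-time argument, which produces a self-improving recursive bound that is then iterated to yield exponential decay. By the homogeneity of the conclusion in $\|f\|_*$, I may first reduce to the normalized case $\|f\|_* = 1$; the constants $C_1, C_2$ will be read off at the end.

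Fix a cube $Q_0$. The idea is to study the quantity
\begin{equation*}
F(t) := \sup_Q \frac{1}{|Q|}\, \bigl|\{x \in Q : |f(x) - f_Q| > t\}\bigr|,
\end{equation*}
where the supremum is over all cubes $Q \subset \rn$, and show that $F(t) \le A e^{-\a t}$ for suitable $A, \a > 0$. First I would fix a height $b > 1$ (eventually optimized; $b = 2$ works) and apply the Calder\'on--Zygmund decomposition to the function $|f - f_{Q_0}|$ on $Q_0$ at level $b$. Since $(1/|Q_0|)\int_{Q_0} |f - f_{Q_0}| \le \|f\|_* = 1 < b$, dyadic subdivision of $Q_0$ produces a disjoint family $\{Q_j\}$ of maximal subcubes such that $b < (1/|Q_j|)\int_{Q_j} |f - f_{Q_0}| \le 2^n b$, and $|f(x) - f_{Q_0}| \le b$ for almost every $x \in Q_0 \setminus \bigcup_j Q_j$. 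Two standard consequences follow: the packing bound $\sum_j |Q_j| \le |Q_0|/b$, and the mean shift estimate $|f_{Q_j} - f_{Q_0}| \le 2^n b$.

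Now for $t > 2^n b$, the set $\{x \in Q_0 : |f - f_{Q_0}| > t\}$ is (up to a null set) contained in $\bigcup_j Q_j$, and on each $Q_j$ the triangle inequality combined with the mean shift bound gives $\{|f - f_{Q_0}| > t\} \cap Q_j \subset \{|f - f_{Q_j}| > t - 2^n b\} \cap Q_j$. Taking normalized measures and using the packing bound yields
\begin{equation*}
\frac{1}{|Q_0|}\, \bigl|\{x \in Q_0 : |f - f_{Q_0}| > t\}\bigr| \le \frac{1}{b}\, F(t - 2^n b).
\end{equation*}
Since $Q_0$ was arbitrary, this gives the recursive inequality $F(t) \le F(t - 2^n b)/b$ for all $t > 2^n b$.

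The last step is iteration. Writing $c := 2^n b$, for any $t > 0$ choose the integer $k$ with $kc < t \le (k+1)c$; $k$-fold iteration yields $F(t) \le b^{-k} F(t - kc) \le b^{-k}$, since $F \le 1$ trivially. Substituting $k \ge t/c - 1$ produces
\begin{equation*}
F(t) \le b \cdot \exp\!\left(-\frac{\log b}{2^n b}\, t\right),
\end{equation*}
which after rescaling by $\|f\|_*$ gives the desired inequality with $C_1 = b$ and $C_2 = (\log b)/(2^n b)$ (both depending only on $n$). I expect the main subtlety to be verifying cleanly that the set of escape is contained in $\bigcup_j Q_j$ modulo a null set (which uses the Lebesgue differentiation theorem on the good set where $|f - f_{Q_0}| \le b$) and tracking constants so that $F(t)$ really is bounded by $1$ at $t = 0$, so that the iteration terminates against a finite quantity rather than a tautology.
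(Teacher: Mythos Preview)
Your argument is the classical Calder\'on--Zygmund stopping-time proof of the John--Nirenberg inequality, and it is correct as written; the normalization, the recursive bound $F(t)\le b^{-1}F(t-2^n b)$, and the iteration to exponential decay are all standard and carried out cleanly.

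However, there is nothing to compare against: the paper does not prove this theorem. It is stated in the preliminaries section as a quoted result of John and Nirenberg, with references to \cite{JN} and \cite{GR}, and no argument is supplied. So your proposal is not an alternative to the paper's proof but rather a proof of a result the paper takes for granted. For what it is worth, the argument you give is essentially the one found in the cited sources.
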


\begin{lem}[\cite{GR}, p. 166]\label{lem002.4.}
Let $f\in \B$ and $p\in (0,\i)$. Then for every $\la$ such that
$0<\la <C_2/\|f\|_*$, where $C_2$ is the same constant appearing
in \eqref{eq0002.2}, we have
$$
\sup_{Q} \frac{1}{|Q|}\int_{Q}\exp\{\la|f(x)-f_{Q}|\}dx<\i.
$$
\end{lem}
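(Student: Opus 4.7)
The plan is to derive the estimate as a direct consequence of the John-Nirenberg distributional inequality~\eqref{eq0002.2} together with the layer-cake representation. The exponential decay of the level sets $|\{x\in Q:|f(x)-f_Q|>s\}|$ supplied by~\eqref{eq0002.2} is precisely the ingredient needed to make $\exp\{\la|f-f_Q|\}$ uniformly integrable over cubes $Q$, as long as $\la$ is strictly smaller than the decay rate $C_2/\|f\|_*$.

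Concretely, fixing an arbitrary cube $Q$, I would apply the layer-cake formula with the test function $\phi(s)=e^{\la s}-1$ (so that $\phi(0)=0$ and $\phi'(s)=\la e^{\la s}$) to obtain
$$\int_Q \bigl(\exp\{\la|f(x)-f_Q|\}-1\bigr)\,dx = \la\int_0^\infty e^{\la s}\,\bigl|\{x\in Q:|f(x)-f_Q|>s\}\bigr|\,ds.$$
Inserting the bound~\eqref{eq0002.2} on the distribution function on the right then yields
$$\int_Q \bigl(\exp\{\la|f(x)-f_Q|\}-1\bigr)\,dx \le C_1\la|Q|\int_0^\infty \exp\bigl\{(\la-C_2/\|f\|_*)s\bigr\}\,ds.$$

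Since by hypothesis $\la<C_2/\|f\|_*$, the exponent in the $s$-integral is strictly negative, so the integral converges to $(C_2/\|f\|_*-\la)^{-1}$, a finite quantity depending only on $\la$ and $\|f\|_*$ and not on $Q$. Dividing by $|Q|$ and restoring the $1$ subtracted from the exponential gives
$$\frac{1}{|Q|}\int_Q \exp\{\la|f(x)-f_Q|\}\,dx \le 1 + \frac{C_1\la}{C_2/\|f\|_*-\la},$$
and taking the supremum over cubes $Q$ completes the proof.

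The argument presents no substantial obstacle, since John-Nirenberg is invoked as a black box from the preceding theorem and the remaining step is a one-line computation; the only minor bookkeeping point is subtracting $1$ from the exponential to place it inside the layer-cake framework, which is purely cosmetic. The auxiliary parameter $p\in(0,\infty)$ in the statement plays no explicit role in the conclusion and appears to be listed only to emphasize that, by the same argument applied with $\la$ replaced by $p\la$, one obtains uniform $L_p(Q)$-norm bounds for $\exp\{\la|f-f_Q|\}$ whenever $p\la<C_2/\|f\|_*$.
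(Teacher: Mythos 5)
Your argument is correct and is exactly the standard derivation of the exponential integrability from the John--Nirenberg inequality \eqref{eq0002.2} via the layer-cake formula; the paper does not prove this lemma but cites it from [GR, p.~166], where the same computation is carried out. Your observation that the parameter $p$ plays no role in the stated conclusion is also accurate.
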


\begin{lem}[\cite{JN} and \cite{BenSharp}]\label{lem2.4.}
For $p\in (0,\i)$, $\B(p)=\B$, with equivalent norms, where
$$
\|f\|_{\B(p)}:=\sup_{Q}\left( \frac{1}{|Q|}\int_{Q}|f(y)-f_{Q}|^p
dy\right)^{\frac{1}{p}}.
$$
\end{lem}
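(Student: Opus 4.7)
The plan is to establish the two-sided inequality $c_p\|f\|_{\B(p)} \le \|f\|_* \le C_p\|f\|_{\B(p)}$ for every $p \in (0,\infty)$, combining Jensen's inequality with the John-Nirenberg distributional estimate recorded as \eqref{eq0002.2}.

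I would begin with the bound $\|f\|_{\B(p)} \le C_p \|f\|_*$, valid uniformly for all $p > 0$. Fixing a cube $Q$ and setting $g = f - f_Q$, the layer-cake identity
$$
\frac{1}{|Q|}\int_Q |g|^p \, dy = \frac{p}{|Q|}\int_0^\infty t^{p-1}\,|\{x\in Q: |g(x)|>t\}|\,dt
$$
combined with the substitution of \eqref{eq0002.2} for the inner measure yields
$$
\frac{1}{|Q|}\int_Q |f - f_Q|^p\,dy \le pC_1 \int_0^\infty t^{p-1}\exp\!\left(-\frac{C_2 t}{\|f\|_*}\right)dt = \frac{pC_1\Gamma(p)}{C_2^p}\,\|f\|_*^p.
$$
Extracting $p$-th roots and taking the supremum over $Q$ gives the claim with explicit constant $C_p = (pC_1\Gamma(p))^{1/p}/C_2$.

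For the reverse direction in the easy range $p \ge 1$, Jensen's inequality applied to the probability measure $|Q|^{-1}\,dy$ on each cube gives
$$
\frac{1}{|Q|}\int_Q |f - f_Q|\,dy \le \left(\frac{1}{|Q|}\int_Q |f - f_Q|^p\,dy\right)^{1/p},
$$
whence $\|f\|_* \le \|f\|_{\B(p)}$ upon taking the supremum over $Q$.

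The main obstacle is the range $0 < p < 1$, where Jensen now points the wrong way and the naive Chebyshev bound $|\{x\in Q:|f-f_Q|>t\}|\le t^{-p}|Q|\,\|f\|_{\B(p)}^p$ provides only polynomial tail decay, which is insufficient to control the $L^1$-mean of $f - f_Q$. The plan here is to upgrade this polynomial decay into exponential decay by a Calder\'on-Zygmund stopping-time decomposition applied to $|f - f_Q|^p$: after normalizing $\|f\|_{\B(p)}=1$ and fixing $Q$, one stops at a level $\alpha^p$ chosen large enough that the maximal stopping cubes $\{Q_j\}$ satisfy $\sum_j|Q_j|\le \tfrac12|Q|$. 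Iterating the decomposition inside each $Q_j$, while exploiting both the scale-invariance of the $\B(p)$ hypothesis on subcubes and the $p$-subadditivity $|a+b|^p\le|a|^p+|b|^p$ (valid for $p\le 1$) to propagate the local control from $f - f_{Q_j}$ back to $f - f_Q$, produces an exponential estimate of the form $|\{x\in Q:|f(x)-f_Q|>kA\}|\le 2^{-k}|Q|$, which integrates to $\|f\|_*\lesssim_p \|f\|_{\B(p)}$. This bootstrap from weak $L^p$ input to exponential decay---essentially the classical John-Nirenberg argument adapted to the weaker hypothesis---is the technical heart of the proof; the detailed verification is given in \cite{BenSharp}.
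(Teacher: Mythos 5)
The paper gives no proof of this lemma at all---it is imported wholesale from \cite{JN} and \cite{BenSharp}---so there is no argument of the authors' to compare against; your proposal supplies more than the paper does. What you write out in full is correct: the bound $\|f\|_{\B(p)}\le (C_1\Gamma(p+1))^{1/p}C_2^{-1}\|f\|_*$ follows exactly as you say from the layer-cake formula and \eqref{eq0002.2}, and Jensen gives $\|f\|_*\le\|f\|_{\B(p)}$ for $p\ge1$. You also correctly isolate the only genuinely hard case, $0<p<1$, and the stopping-time bootstrap from polynomial to exponential decay is the right strategy. One wrinkle in your sketch is worth flagging, though: for $p<1$ the averages $f_{Q_j}$ over the stopping cubes are themselves not controlled by the $p$-th power means (Jensen points the wrong way here as well), so the drift $|f_{Q_j}-f_Q|$ cannot be bounded as in the classical John--Nirenberg iteration, and ``propagating the control from $f-f_{Q_j}$ back to $f-f_Q$'' is not immediate. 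The standard repair, which is what \cite{BenSharp} (and Str\"omberg's median formulation) actually does, is to run the iteration with medians, or with constants $c_Q$ nearly minimizing $\int_Q|f-c|^p$; for these the $p$-triangle inequality does give $|c_{Q_j}-c_Q|^p\lesssim_n 1$ across a Calder\'on--Zygmund parent--child pair, and only after the exponential decay of $|\{x\in Q:|f(x)-c_Q|>t\}|$ is established does one return to $f_Q$ via $|f_Q-c_Q|\le|Q|^{-1}\int_Q|f-c_Q|\lesssim\|f\|_{\B(p)}$. Since you defer the detailed verification to \cite{BenSharp} in any case, this is a presentational rather than a substantive gap, but as stated the iteration with the means $f_{Q_j}$ would not close.
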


%
%

A continuously increasing function on $[0,\i]$, say $\Psi:
[0,\i]\rightarrow [0,\i]$ such that $\Psi (0)=0$, $\Psi(1)=1$ and
$\Psi(\i)=\i$, will be referred to as an Orlicz function. If
$\Psi$ is a Orlicz function, then
$$
\Phi (t)=\sup \{ts-\Psi(s): s\in [0,\i]\}
$$
is the complementary Orlicz function to $\Psi$.

Let $\Omega$ be a cube of $\rn$. The generalized Orlicz space
denoted by $L^{\Psi}(\Omega)$ consists of all functions $g: \Omega
\subseteq \rn \rightarrow \R$ such that
$$
\int_{\Omega} \Psi \left(\frac{|g|}{\a}\right)(x)dx <\i
$$
for some $\a >0$.

Let us define the $\Psi$-average of $g$ over a cube $Q$ contained in
$\Omega$ by
\begin{equation*}
\|g\|_{\Psi,Q}=\inf \left\{\la>0: \frac{1}{|Q|}\int_{Q}
\Psi\left(\frac{|g(y)|}{\a}\right)dy\leq 1\right\}.
\end{equation*}
When $\Psi$ is a Young function, i.e. a convex Orlicz function, the
quantity
\begin{equation*}
\|f\|_{\Psi}=\inf \left\{\la>0: \int_{\Omega}
\Psi\left(\frac{|f(y)|}{\a}\right)dy\leq 1\right\}
\end{equation*}
is well-known Luxemburg norm in the space $L^{\Psi}(\Omega)$ (see
\cite{RR}).

If $f\in L^{\Psi}(\rn)$, the maximal function of $f$ with respect to
$\Psi$ is defined by setting
\begin{equation*}
M_{\Psi}f(x)=\sup_{Q \ni x} \|f\|_{\Psi,Q}.
\end{equation*}

The generalized H\"older's inequality
\begin{equation}\label{genHolder}
\frac{1}{|Q|}\int_{Q} |f(y)g(y)|dy \leq \|f\|_{\Phi,Q}
\|g\|_{\Psi,Q},
\end{equation}
where $\Psi$ is the complementary Young function associated to
$\Phi$, holds.

The main example that we are going to be using is
$\Phi(t)=t(1+\log^+ t)$ with maximal function defined by $M_{L(\log
L)}$. The complementary Young function is given by
$\Psi(t)\thickapprox e^t$ with the corresponding maximal function
denoted by $M_{\exp L}$.

Recall the definition of quasinorm of Zygmund space:
$$
\|f\|_{L(1+\log^+ L)}:= \int_{\rn} |f(x)|(1+\log^+ |f(x)|)dx
$$

The size of $M^2$ is given by the following.
\begin{lem}[\cite{CPer}, Lemma 1.6]\label{lem002.8}
There exists a positive constant $C$ such that for any function $f$
and for all $\la >0$,
\begin{equation}
|\{x  \in\rn :  M^2f(x)> \la \}|  \leq C
\int_{\rn}\frac{|f(x)|}{\la}\left(1+\log^+
\left(\frac{|f(x)|}{\la}\right)\right)dx.
\end{equation}
\end{lem}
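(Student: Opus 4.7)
The plan is to establish the pointwise bound
$$
M^2 f(x) \lesssim M_{L(\log L)} f(x) \qquad (x \in \rn),
$$
where $M_{L(\log L)}$ is the Orlicz maximal operator associated with $\Phi(t) = t(1+\log^+ t)$, and then to deduce the claim from a weak-type endpoint for $M_{L(\log L)}$ that reduces to the weak $(1,1)$ inequality for the Hardy--Littlewood maximal operator $M$.

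For the pointwise step I would fix $x \in \rn$ and a cube $Q \ni x$, and split $f = f\chi_{2Q} + f\chi_{(2Q)^c} =: f_1 + f_2$. The two pieces of $Mf \le Mf_1 + Mf_2$ are handled separately. For the local part, Stein's $L(\log L)$ inequality gives
$$
\frac{1}{|Q|}\int_Q Mf_1(y)\,dy \lesssim \|f\|_{\Phi, 2Q} \lesssim M_{L(\log L)} f(x).
$$
For the tail, a standard geometric observation shows that $Mf_2$ is essentially constant on $Q$: any cube centered at a point of $Q$ which meets $(2Q)^c$ has side length comparable to, or larger than, that of $Q$, so $Mf_2(y) \lesssim Mf(x) \lesssim M_{L(\log L)} f(x)$ uniformly in $y \in Q$. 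Averaging and taking the supremum over all cubes $Q \ni x$ yields the pointwise domination.

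For the endpoint step, observe that by the definition of the Luxemburg norm, $\|g\|_{\Phi, Q} > \lambda$ is equivalent to $\frac{1}{|Q|}\int_Q \Phi(|g|/\lambda)\,dy > 1$, so $M_{L(\log L)} f(x) > \lambda$ is equivalent, up to an absolute constant, to $M\bigl(\Phi(|f|/\lambda)\bigr)(x) > 1$. The weak type $(1,1)$ of $M$ then yields
$$
|\{x \in \rn : M_{L(\log L)} f(x) > \lambda\}| \lesssim \int_{\rn} \Phi\Bigl(\frac{|f(x)|}{\lambda}\Bigr)\,dx = \int_{\rn} \frac{|f(x)|}{\lambda}\Bigl(1 + \log^+\frac{|f(x)|}{\lambda}\Bigr)dx,
$$
and combining with the pointwise bound finishes the proof.

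The main obstacle I expect is the local $L(\log L)$ estimate for $Mf_1$: it rests on Stein's classical inequality controlling $\|Mf\|_{L^1(Q)}$ by $\|f\|_{L(\log L)(2Q)}$, whose proof uses a Calder\'on--Zygmund decomposition at each dyadic level and a careful tracking of the constants in the weak-type $(1,1)$ estimate for $M$. The tail estimate and the reduction to the weak type $(1,1)$ for $M$ are routine once the Orlicz-norm manipulations and the cube geometry are set up correctly.
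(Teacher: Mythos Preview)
The paper does not supply its own proof of this lemma; it is simply quoted from \cite{CPer} as Lemma~1.6, so there is nothing in the present paper to compare your argument against.

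Your proposal is correct and is the standard route. The pointwise domination $M^2 f \lesssim M_{L(\log L)} f$ via the splitting $f = f\chi_{2Q} + f\chi_{(2Q)^c}$, Stein's local $L\log L$ estimate for the near part, and the essentially-constant tail estimate for $Mf_2$ on $Q$, is exactly how this equivalence is usually proved; the paper in fact invokes the two-sided equivalence $M^2 \thickapprox M_{L\log L}$ later in the proof of Theorem~\ref{main}, citing \cite{CPer} and \cite{graf}. Your endpoint reduction $\{M_{L(\log L)} f > \lambda\} = \{M(\Phi(|f|/\lambda)) > 1\}$ followed by the weak $(1,1)$ bound for $M$ is also the standard argument and is sound (the equality of the two level sets uses that $\Phi$ is continuous and increasing, so the Luxemburg infimum is attained). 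The only caveat is that Stein's inequality in its usual form reads $\int_Q M g \lesssim |Q| + \int_{2Q} |g|(1+\log^+|g|)$, so one should normalize (divide $f$ by $\|f\|_{\Phi,2Q}$) before applying it to obtain the clean bound $\frac{1}{|Q|}\int_Q Mf_1 \lesssim \|f\|_{\Phi,2Q}$; this is routine but worth making explicit.
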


\section{Proof of main estimate}\label{sect3}

\noindent{\bf \textit {Proof of Theorem \ref{main}.}} Let $x\in \rn$
and fix a cube $x\in Q$. Let $f=f_1+f_2$, where $f_1=f\chi_{3Q}$.
Since for any $y\in \rn$
\begin{equation*}
\begin{split}
C_b(f)(y)&=M((b-b(y))f)(y)=M((b-b_{3Q}+b_{3Q}-b(y))f)(y)\\
& \leq M((b-b_{3Q})f_1)(y)+M((b-b_{3Q})f_2)(y)+|b(y)-b_{3Q}|Mf(y),
\end{split}
\end{equation*}
we have
\begin{align}
\left( \frac{1}{|Q|}\int_{Q} \left(C_b(f)(y)\right)^{\d}
dy\right)^{\frac{1}{\d}}  \lesssim & \left(
\frac{1}{|Q|}\int_{Q}|M((b-b_{3Q})f_1)(y)|^{\d}
dy\right)^{\frac{1}{\d}} \notag\\
& +\left( \frac{1}{|Q|}\int_{Q}|M((b-b_{3Q})f_2)(y)|^{\d}
dy\right)^{\frac{1}{\d}} \notag\\
& +\left( \frac{1}{|Q|}\int_{Q}||b(y)-b_{3Q}|^{\d}(Mf(y))^{\d}
dy\right)^{\frac{1}{\d}}\notag\\
= & \,{\rm I}+{\rm II}+{\rm III}. \label{eq003.4}
\end{align}
Since
\begin{align*}
\int_{Q}|M((b-b_{3Q})f_1)(y)|^{\d}
dy & =\int_0^{|Q|}\left[{\left(M((b-b_{3Q})f_1)\right)^*(t)}\right]^{\d}dt\\
& \leq
\left[\sup_{0<t<|Q|}t\left(M((b-b_{3Q})f_1)\right)^*(t)\right]^{\d}\int_0^{|Q|}t^{-\d}dt,
\end{align*}
using the boundedness of $M$ from $L_1(\rn)$ to $L^{1,\i}(\rn)$ we
have
\begin{align*}
\int_{Q}|M((b-b_{3Q})f_1)(y)|^{\d} dy & \lesssim
\|(b-b_{3Q})f_1\|_{L_1(\rn)}^{\d}
|Q|^{-\d+1}\\
&=\|(b-b_{3Q})f\|_{L_1(3Q)}^{\d} |Q|^{-\d+1}.
\end{align*}
Thus
\begin{equation*}
{\rm I} \lesssim \frac{1}{|Q|}\int_{3Q}|b(y)-b_{3Q}||f(y)|dy.
\end{equation*}
By generalized H\"older's inequality \eqref{genHolder}, we get
\begin{equation*}
{\rm I} \lesssim \|b-b_{3Q}\|_{\exp L,3Q} \|f\|_{L \log L,3Q}.
\end{equation*}
Since by Lemma \ref{lem002.4.}, there is a  constant $C>0$ such that
for any cube $Q$,
$$
\|b-b_{Q}\|_{\exp L,Q} \leq C \|b\|_*,
$$
we arrive at
\begin{equation}\label{eqI}
{\rm I} \lesssim \|b\|_{*} M_{L \log L}f(x).
\end{equation}

Let us estimate ${\rm II}$. Since ${\rm II}$ is comparable to
$\inf_{y\in Q}M((b-b_{3Q})f)(y)$ (see \cite{GR},~p. 160, for
instance), then
$$
{\rm II}\lesssim M((b-b_{3Q})f)(x).
$$
Again by generalized H\"older's inequality and Lemma
\ref{lem002.4.}, we get
\begin{equation}\label{eqII}
{\rm II} \lesssim  \sup_{x\in Q}\|b-b_{3Q}\|_{\exp L, 3Q} \|f\|_{L
\log L,3Q} \lesssim \|b\|_{*} M_{L \log L}f(x).
\end{equation}

Let $\d<\e<1$. To estimate ${\rm III}$ we use H\"older's inequality
with exponents $r$ and $r'$, where $r=\e /\d >1$:
\begin{equation*}
\begin{split}
{\rm III} \leq \left( \frac{1}{|Q|}\int_{Q}|b(y)-b_{3Q}|^{\d r'}
dy\right)^{\frac{1}{\d r'}} \left( \frac{1}{|Q|}\int_{Q}(Mf(y))^{\d
r} dy\right)^{\frac{1}{\d r}}.
\end{split}
\end{equation*}
By Lemma \ref{lem2.4.} we get
\begin{equation}\label{eqIII}
\begin{split}
{\rm III} \lesssim \|b\|_{*} \left(
\frac{1}{|Q|}\int_{3Q}(Mf(y))^{\e} dy\right)^{\frac{1}{\e}} \leq
\|b\|_{*} M_{\e}(Mf)(x).
\end{split}
\end{equation}
Finally, since $M^2 \thickapprox M_{L \log L}$ (see \cite{CPer}, p.
174 and \cite{graf}, p. 159, for instance), by \eqref{eq003.4},
\eqref{eqI}, \eqref{eqII} and \eqref{eqIII}, we get
\begin{equation}\label{eq000003.1}
M_{\d}(C_b(f))(x)\leq C \|b\|_{*} \left(M_{\e}(Mf)(x)+M^2f(x)\right)
\end{equation}
Since
$$
M_{\e}(Mf)(x)\leq M^2f(x), \quad  \mbox{when} \quad 0<\e<1,
$$
we arrive at \eqref{eqmain2}.
$$\hspace{12.5cm}\square$$

\section{New proof of weak-type $L(\log L)$ estimate for $C_b$}\label{sect4}

In this section we give new proof of the result by Alphonse from
\cite{Alphonse} using Theorem \ref{lem1111111} and Lemma
\ref{lem002.8}. We also show that $b \in \B(\rn)$ is necessary
condition for $C_b$ to be of weak-type $L(\log L)$. The fact that
the operator $C_b$ fails to be of weak type (1,1) follows from
Lemma \ref{pointwise1} and Example \ref{ex4.7}.

\begin{thm}\label{mainthm}
The following assertions are equivalent:

{\rm (i)} There exists a positive constant $C$ such that for each
$\la >0$, inequality \eqref{eq0003.7} holds for all $f\in L(1+\log^+
L)$.

{\rm (ii)} $b\in \B (\rn)$.
\end{thm}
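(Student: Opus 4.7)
The plan is to establish the two implications of Theorem~\ref{mainthm} separately, using Theorem~\ref{lem1111111} and Lemma~\ref{lem002.8} for the forward direction, and a direct characteristic-function test followed by a self-improvement argument for the converse.

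For (ii)~$\Rightarrow$~(i), I would invoke Theorem~\ref{lem1111111} to obtain the pointwise bound $C_b f(x)\le C\|b\|_* M^2 f(x)$, so that
$$
|\{x\in\rn:\ C_b f(x)>\lambda\}| \le |\{x\in\rn:\ M^2 f(x) > \lambda/(C\|b\|_*)\}|.
$$
Applying Lemma~\ref{lem002.8} at the shifted threshold $\lambda/(C\|b\|_*)$ yields a right-hand side of the form $c_1\int_{\rn}(|f|/\lambda)(1+\log^+(c_2|f|/\lambda))\,dx$, where $c_2$ depends only on $\|b\|_*$. The submultiplicative inequality $1+\log^+(ab)\le(1+\log^+ a)(1+\log^+ b)$ absorbs $c_2$ into an overall multiplicative constant, yielding \eqref{eq0003.7}.

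For (i)~$\Rightarrow$~(ii), I would test the weak-type inequality on $f=\chi_Q$ for an arbitrary cube $Q$. For $x\in Q$, Jensen's inequality applied to the average over $Q$ gives
$$
C_b(\chi_Q)(x)\ge \frac{1}{|Q|}\int_Q |b(x)-b(y)|\,dy \ge |b(x)-b_Q|.
$$
With $f=\chi_Q$, the right-hand side of \eqref{eq0003.7} equals $C(|Q|/\lambda)(1+\log^+(1/\lambda))$, which reduces to $C|Q|/\lambda$ whenever $\lambda\ge 1$; combined with the trivial bound $|Q|$ for $\lambda<1$, we arrive at
$$
|\{x\in Q:\ |b(x)-b_Q|>\lambda\}| \le \min\{|Q|,\,C|Q|/\lambda\} \qquad (\lambda>0).
$$

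The main obstacle is that this estimate furnishes only $O(1/\lambda)$ decay, strictly weaker than the exponential John--Nirenberg decay that characterises $\B$, so an $L^1$ bound on $b-b_Q$ over $Q$ cannot be recovered directly. To bypass this, I would integrate at a subcritical exponent $0<p<1$ via the layer-cake formula
$$
\int_Q |b-b_Q|^p\,dx = p\int_0^\infty \lambda^{p-1}|\{x\in Q:|b(x)-b_Q|>\lambda\}|\,d\lambda,
$$
splitting at $\lambda=1$. The contribution from $\lambda<1$ is dominated by $|Q|$, while the contribution from $\lambda\ge 1$ is at most $Cp|Q|\int_1^\infty\lambda^{p-2}\,d\lambda = Cp|Q|/(1-p)$, convergent precisely because $p<1$. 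Dividing by $|Q|$ and taking $p$-th roots gives $\bigl((1/|Q|)\int_Q|b-b_Q|^p\bigr)^{1/p}\le C_p$ uniformly in $Q$, i.e.~$\|b\|_{\B(p)}<\infty$, and the equivalence $\B(p)=\B$ from Lemma~\ref{lem2.4.} upgrades the conclusion to $b\in\B(\rn)$.
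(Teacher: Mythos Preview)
Your proposal is correct and follows essentially the same route as the paper: (ii)~$\Rightarrow$~(i) via Theorem~\ref{lem1111111}, Lemma~\ref{lem002.8}, and the submultiplicative log inequality, and (i)~$\Rightarrow$~(ii) by testing on $f=\chi_Q$, extracting the level-set bound $|\{x\in Q:|b(x)-b_Q|>\lambda\}|\lesssim|Q|/\lambda$ for $\lambda\ge 1$, integrating via the layer-cake formula at a subcritical exponent $p\in(0,1)$, and invoking Lemma~\ref{lem2.4.}. The paper records the intermediate bound in the form $C|Q|\lambda^{-1}(1+\log^+(1/\lambda))$ before specialising to $\lambda\ge 1$, but this is cosmetic.
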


\begin{proof}
{${\rm (i)}\Rightarrow {\rm (ii)}$}. Let $Q_0$ be any fixed cube and
let $f=\chi_{Q_0}$. For any $\la >0$ we have
\begin{align*}
|\{x \in \rn: \,\, C_b(f) (x)  >\la\}| & =|\{x\in \rn: \sup_{x \in
Q}\frac{1}{|Q|}\int_{Q\cap Q_0}|b(x)-b(y)|dy
>\la\}| \\
& \geq |\{x\in Q_0 : \sup_{x \in Q}\frac{1}{|Q|}\int_{Q\cap
Q_0}|b(x)-b(y)|dy
>\la\}|\\
& \geq |\{x\in Q_0 : \frac{1}{|Q_0|}\int_{ Q_0}|b(x)-b(y)|dy
>\la\}|\\
& \geq |\{x\in Q_0 : |b(x)-b_{Q_0}|
>\la\}|,
\end{align*}
since
$$
|b(x)-b_{Q_0}| \leq \frac{1}{|Q_0|}\int_{ Q_0}|b(x)-b(y)|dy.
$$
By assumption the inequality \eqref{eq0003.7} holds for $f$, thus we
have
$$
|\{x\in Q_0 : |b(x)-b_{Q_0}|
>\la\}| \leq C |Q_0|\frac{1}{\la}\left(1+\log^+
\frac{1}{\la}\right).
$$
For $0<\d<1$ we have
\begin{align*}
\int_{Q_0} |b-b_{Q_0}|^{\d}&=\d \int_0^{\i} \la^{\d-1}|\{x\in Q_0
:
|b(x)-b_{Q_0}| >\la\}|d\la\\
&=\d \left\{ \int_0^1 + \int_1^{\i}\right\} \la^{\d-1}|\{x\in Q_0 :
|b(x)-b_{Q_0}| >\la\}|d\la \\
& \leq \d|Q_0|\int_0^1 \la^{\d-1}d\la +
C\d|Q_0|\int_1^{\i}\la^{\d-1} \frac{1}{\la}\left(1+\log^+
\frac{1}{\la}\right)d\la\\
& = |Q_0|+C\d|Q_0|\int_1^{\i}\la^{\d-2} d\la
=\left(1+C\frac{\d}{1-\d}\right)|Q_0|.
\end{align*}
Thus $b\in \B_{\d}(\rn)$. Then by Lemma \ref{lem2.4.} we get that
$b\in \B$.

{${\rm (ii)}\Rightarrow {\rm (i)}$}. By Theorem \ref{lem1111111} and
Lemma \ref{lem002.8}, we have
\begin{align*}
|\{x  \in\rn : C_b(f)(x) & > \la \}| \\
&\leq |\{x  \in\rn : M^2f(x)>
\frac{\la}{C\|b\|_{*}} \}| \\
& \leq C \int_{\rn}\frac{C\|b\|_{*}|f(x)|}{\la}\left(1+\log^+
\left(\frac{C\|b\|_{*}|f(x)|}{\la}\right)\right)dx.
\end{align*}
Since the inequality
\begin{equation}\label{log}
1+\log^+ (ab)\leq (1+\log^+ a)(1+\log^+ b)
\end{equation}
holds for any $a,\,b>0$, we get
\begin{equation*}
\begin{split}
|\{x  \in\rn :\, & C_b(f)(x)> \la \}| \\
& \leq C \|b\|_* (1+\log^+ \|b\|_*)
\int_{\rn}\frac{|f(x)|}{\la}\left(1+\log^+
\left(\frac{|f(x)|}{\la}\right)\right)dx.
\end{split}
\end{equation*}
\end{proof}


\section{Pointwise estimates for the commutator of maximal operator}\label{sect5}

In this section we obtain pointwise estimate of the commutators of
the maximal operator by iterated maximal function.

We shall reduce the study of this commutator to that of $C_b$. The
following lemma is true.
\begin{lem}\label{pointwise1}
Let $b$ be any non-negative locally integrable function on $\rn$.
Then
\begin{equation}\label{eqPointwise}
|[M,b]f(x)|\leq C_b(f)(x)
\end{equation}
for all functions from $L_1^{\loc}(\rn)$.
\end{lem}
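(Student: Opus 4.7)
The plan is to exploit the fact that since $b \geq 0$, the absolute values inside the definitions of $M(bf)$ and $Mf$ let us pull $b$ out cleanly, converting the two sup's into a single sup of a difference that is pointwise controlled by $C_b(f)(x)$. The only subtle point is that $M(bf)(x)$ and $b(x)Mf(x)$ are \emph{each} suprema, so we cannot simply select one cube and subtract; we need to handle the sign of $M(bf)(x) - b(x)Mf(x)$ separately.

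First I set $A := M(bf)(x)$ and $B := b(x)Mf(x)$, and observe that for any cube $Q \ni x$,
\begin{equation*}
\frac{1}{|Q|}\int_Q b(y)|f(y)|\,dy - b(x)\frac{1}{|Q|}\int_Q |f(y)|\,dy = \frac{1}{|Q|}\int_Q (b(y)-b(x))|f(y)|\,dy,
\end{equation*}
whose absolute value is at most $C_b(f)(x)$ by definition of the maximal commutator. Since $b(x) \geq 0$, the inequality $\frac{1}{|Q|}\int_Q |f(y)|\,dy \leq Mf(x)$ gives $-b(x)Mf(x) \leq -b(x)\frac{1}{|Q|}\int_Q|f(y)|\,dy$, so taking the supremum over $Q \ni x$ yields
\begin{equation*}
A - B \;\leq\; \sup_{Q \ni x}\Bigl(\tfrac{1}{|Q|}\textstyle\int_Q b(y)|f(y)|\,dy - b(x)\tfrac{1}{|Q|}\textstyle\int_Q|f(y)|\,dy\Bigr) \;\leq\; C_b(f)(x).
\end{equation*}
This handles the case $A \geq B$.

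For the reverse case $B \geq A$, I would use the dual bound $-A \leq -\frac{1}{|Q|}\int_Q b(y)|f(y)|\,dy$ for every $Q \ni x$, and take the supremum on the $b(x)Mf(x)$ side instead:
\begin{equation*}
B - A \;\leq\; \sup_{Q \ni x}\Bigl(b(x)\tfrac{1}{|Q|}\textstyle\int_Q|f(y)|\,dy - \tfrac{1}{|Q|}\textstyle\int_Q b(y)|f(y)|\,dy\Bigr) \;\leq\; C_b(f)(x).
\end{equation*}
Combining both cases gives $|[M,b]f(x)| = |A - B| \leq C_b(f)(x)$, which is the claimed pointwise estimate (with constant $1$).

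The main obstacle, though modest, is the bookkeeping around the two independent suprema: one must not assume a common optimizing cube for $M(bf)$ and $Mf$. The trick is to absorb one of them into the supremum of a single expression using the nonnegativity of $b(x)$, which is precisely where the hypothesis $b \geq 0$ enters in an essential way. No appeal to BMO or to any earlier lemma is needed.
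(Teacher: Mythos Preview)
Your proof is correct and follows essentially the same idea as the paper's, namely that nonnegativity of $b$ lets one compare $M(bf)(x)$ and $b(x)Mf(x)$ by a single averaged difference. The paper packages the two cases into the one-line sublinearity estimate $|Mf(x)-Mg(x)|\le M(f-g)(x)$ together with the identity $b(x)Mf(x)=M(b(x)f)(x)$ (valid since $b(x)\ge 0$), which immediately gives $|[M,b]f(x)|\le M((b-b(x))f)(x)=C_b(f)(x)$; your argument is precisely the unpacking of that sublinearity inequality in the particular case $g=b(x)f$.
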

\begin{proof}
It is easy to see that for any $f,\, g\in \Lloc $ the following
pointwise estimate holds:
\begin{equation}\label{sublin1}
|Mf(x)-Mg(x)|\leq M(f-g)(x).
\end{equation}
Since $b$ is non-negative, by \eqref{sublin1} we can write
\begin{equation*}
\begin{split}
|[M,b]f(x)|&=|M(bf)(x)-b(x)Mf(x)|=|M(bf)(x)-M(b(x) f)(x)| \\
&\leq M(bf-b(x)f)(x)=M((b-b(x))f)(x)=C_b(f)(x).
\end{split}
\end{equation*}
\end{proof}

Now we can prove estimation \eqref{eq.0001}. At first, let us
formulate the statement.
\begin{lem}\label{lem00004.2}
Let $b$ be any locally integrable function on $\rn$. Then
inequality \eqref{eq.0001} holds for all functions from
$L_1^{\loc}(\rn)$.
\end{lem}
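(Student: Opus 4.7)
The plan is to reduce the general case to the already-treated non-negative case (Lemma \ref{pointwise1}) by splitting $b$ through its absolute value $|b|$, and paying the cost of the difference $|b|-b=2b^{-}$ as an extra term.

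First I would record the elementary identity that the Hardy-Littlewood maximal operator does not see signs inside its argument: since $Mg(x)=\sup_{Q\ni x}\frac{1}{|Q|}\int_{Q}|g(y)|\,dy$, one has $M(bf)(x)=M(|b|f)(x)$ for any locally integrable $b$ and $f$. Combined with the pointwise decomposition $|b(x)|-b(x)=2b^{-}(x)$, this lets me rewrite
\begin{equation*}
[M,b]f(x)=M(bf)(x)-b(x)Mf(x)=\bigl(M(|b|f)(x)-|b(x)|Mf(x)\bigr)+2b^{-}(x)Mf(x).
\end{equation*}
Taking absolute values and using the triangle inequality yields
\begin{equation*}
|[M,b]f(x)|\le\bigl|[M,|b|]f(x)\bigr|+2b^{-}(x)Mf(x),
\end{equation*}
since $b^{-}\ge 0$ and $Mf\ge 0$.

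Next I would invoke Lemma \ref{pointwise1} applied to the non-negative locally integrable function $|b|$ in place of $b$, which gives $|[M,|b|]f(x)|\le C_{|b|}(f)(x)$. Finally, I would observe that the ordinary triangle inequality $||b(x)|-|b(y)||\le|b(x)-b(y)|$ implies the pointwise domination $C_{|b|}(f)(x)\le C_{b}(f)(x)$ directly from the defining supremum of $C_b$. Chaining these three bounds produces the claimed inequality \eqref{eq.0001}.

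No serious obstacle is anticipated: the whole argument is an algebraic manipulation once one notices the identity $M(bf)=M(|b|f)$ and the fact that $|b|-b=2b^{-}$ generates exactly the extra term that appears on the right-hand side of \eqref{eq.0001}. The only small point to verify carefully is that Lemma \ref{pointwise1} is indeed applicable to $|b|$ (it is, since $|b|$ is non-negative and locally integrable whenever $b$ is) and that the reverse triangle inequality gives $C_{|b|}\le C_{b}$ rather than the other way around.
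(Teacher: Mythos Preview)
Your proof is correct and follows essentially the same route as the paper: reduce to $|b|$ via Lemma \ref{pointwise1}, then use the reverse triangle inequality to pass from $C_{|b|}$ to $C_b$. The only difference is cosmetic: the paper quotes the estimate $|[M,b]f(x)-[M,|b|]f(x)|\le 2b^{-}(x)Mf(x)$ from \cite{BasMilRu}, whereas you derive the sharper exact identity $[M,b]f(x)=[M,|b|]f(x)+2b^{-}(x)Mf(x)$ directly from $M(bf)=M(|b|f)$ and $|b|-b=2b^{-}$.
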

\begin{proof}
Since
$$
|[M,b]f(x)-[M,|b|]f(x)|\leq 2b^-(x) Mf(x)
$$
(see \cite{BasMilRu}, p. 3330, for instance), then
\begin{equation}
|[M,b]f(x)|\leq |[M,|b|]f(x)|+2b^-(x) Mf(x),
\end{equation}
and by Lemma \ref{pointwise1} we have that
\begin{equation*}
|[M,b]f(x)|\leq C_{|b|}f(x)+2b^-(x) Mf(x).
\end{equation*}
Since $||a|-|b||\leq |a-b|$ holds for any $a,b\in \R$, we get
$C_{|b|}f(x)\leq C_b f(x)$ for all $x\in \rn$.
\end{proof}

The following statement follows by Lemma \ref{lem00004.2} and
Theorem \ref{lem1111111}.
\begin{lem}
Let $b$ is in $\B(\rn)$. Then, there exists a positive constant $C$
such that
\begin{equation}
|[M,b]f(x)|\leq C \left(\|b\|_{*}M^2f(x)+b^-(x)Mf(x)\right)
\end{equation}
for all functions from $L_1^{\loc}(\rn)$.
\end{lem}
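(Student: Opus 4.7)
The plan is to obtain the stated pointwise bound by directly concatenating the two preceding results in this section. First I would invoke Lemma \ref{lem00004.2}, which holds for any locally integrable $b$ and gives the pointwise decomposition
$$
|[M,b]f(x)| \le C_b(f)(x) + 2\,b^-(x)\,Mf(x) \qquad (x\in\rn).
$$
This already isolates the ``bad'' negative-part contribution $2b^-(x)Mf(x)$ and reduces the remaining task to controlling the maximal commutator $C_b(f)(x)$.

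Next I would use the hypothesis $b\in\B(\rn)$ to apply Theorem \ref{lem1111111}, which yields
$$
C_b(f)(x) \le C\,\|b\|_{*}\,M^2 f(x) \qquad (x\in\rn),
$$
with a constant $C$ independent of $f$. Substituting this estimate into the bound from Lemma \ref{lem00004.2} and absorbing the harmless factor $2$ on the second term into a single constant gives
$$
|[M,b]f(x)| \le C\bigl(\|b\|_{*}M^2 f(x) + b^-(x)Mf(x)\bigr),
$$
which is exactly the claim. No obstacle arises here: the statement is essentially a bookkeeping corollary, and the only nontrivial ingredients, namely the pointwise splitting \eqref{eq.0001} and the $M^2$-domination of $C_b$, are already available. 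The role of the $\B$ hypothesis is solely to activate Theorem \ref{lem1111111}; Lemma \ref{lem00004.2} itself requires nothing beyond local integrability of $b$.
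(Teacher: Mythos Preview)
Your proposal is correct and matches the paper's own argument exactly: the lemma is stated there as an immediate consequence of Lemma \ref{lem00004.2} and Theorem \ref{lem1111111}, combined precisely as you describe.
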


\begin{cor}\label{cor0000000}
Let $b$ is in $\B(\rn)$ such that $b^-\in L_{\i}(\rn)$. Then, there
exists a positive constant $C$ such that
\begin{equation}
|[M,b]f(x)|\leq C \left(\|b\|_{*}M^2f(x)+\|b^-\|_{\i}Mf(x)\right)
\end{equation}
for all functions from $L_1^{\loc}(\rn)$.
\end{cor}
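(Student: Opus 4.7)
The plan is to deduce this corollary directly from the preceding lemma, which has already established the pointwise bound
$$|[M,b]f(x)|\leq C\left(\|b\|_{*}M^2f(x)+b^-(x)Mf(x)\right)$$
for every $b\in\B(\rn)$ and $f\in L_1^{\loc}(\rn)$. Under the extra hypothesis $b^-\in L_{\i}(\rn)$, the factor $b^-(x)$ that appears pointwise can simply be replaced by its essential supremum.

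Concretely, I would proceed in one step: invoke the previous lemma to obtain the pointwise estimate above, and then use the trivial bound $b^-(x)\leq \|b^-\|_{\i}$ for almost every $x\in\rn$ to conclude
$$|[M,b]f(x)|\leq C\left(\|b\|_{*}M^2f(x)+\|b^-\|_{\i}Mf(x)\right),$$
which is exactly the claimed inequality. No additional analysis is needed; the only subtlety is that the replacement holds almost everywhere, but since both sides of the target inequality are interpreted pointwise almost everywhere, this is harmless.

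There is essentially no obstacle here: the corollary is a cosmetic reformulation of the lemma that makes the right-hand side linear in $f$ with norm-type (rather than pointwise-function-type) coefficients. The only thing one might remark on is that the stronger pointwise version from the lemma is sometimes more useful, but the $L_{\i}$ bound on $b^-$ is exactly what is needed to apply the corollary in Banach function space settings such as the theorem earlier in the introduction.
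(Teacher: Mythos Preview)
Your proposal is correct and matches the paper's approach: the corollary is stated without proof precisely because it follows immediately from the preceding lemma by replacing $b^-(x)$ with $\|b^-\|_{\i}$.
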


Now we are in position to prove Theorem \ref{lem0003.3}.

\noindent{\bf \textit {Proof of Theorem \ref{lem0003.3}.}} Statement
follows by Corollary \ref{cor0000000}, since $f \leq Mf$ and
$\|b\|_* \leq \|b^+\|_*+\|b^-\|_*\lesssim
\|b^+\|_*+\|b^-\|_{L_{\i}}$. Note that by triangle inequality
$\|b^+\|_* +\|b^-\|_{\i}\lesssim \|b\|_*+\|b^-\|_{\i}$ holds also.
$$\hspace{12.5cm}\square$$

As mentioned in the introduction, using  estimate
\eqref{eqPointwise3}, we obtain new proof of the sufficient part
of the result by M.~Milman and T.~Schonbek from \cite{MilSchon}.
\begin{cor}
Let $1<p\leq \i$ and let $b$ is in $\B(\rn)$ such that $b^-\in
L_{\i}(\rn)$. Then the commutator $[M,b]$ is bounded in $L_p(\rn)$.
Moreover, there exists a positive constant $C$ such that
\begin{equation}
\|[M,b]f\|_{L_p(\rn)} \leq C \left(\|b^+\|_{*}+\|b^-\|_{\i}\right)
\|f\|_{L_p(\rn)}
\end{equation}
for all functions from $L_p(\rn)$.
\end{cor}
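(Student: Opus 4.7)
The plan is to reduce the claim entirely to the pointwise estimate already established in Theorem \ref{lem0003.3} and then invoke the $L_p$-boundedness of the Hardy--Littlewood maximal operator.

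First I would apply Theorem \ref{lem0003.3} to any $f \in L_p(\rn)$, noting that $L_p(\rn) \subset L_1^{\loc}(\rn)$ for every $1 \leq p \leq \infty$, so the hypotheses of that theorem are satisfied and the commutator $[M,b]f$ is well defined (since $b \in \B(\rn)$ is locally in $L_q$ for every $q<\infty$, the product $bf$ is locally integrable). This yields
$$
|[M,b]f(x)| \leq C_0\bigl(\|b^+\|_{*}+\|b^-\|_{\i}\bigr)\, M^2 f(x) \qquad (x\in\rn).
$$
Taking the $L_p(\rn)$ norm of both sides reduces the problem to estimating $\|M^2 f\|_{L_p(\rn)}$ in terms of $\|f\|_{L_p(\rn)}$.

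Next I would invoke the classical boundedness of $M$ on $L_p(\rn)$ for $1 < p \leq \infty$: for $1 < p < \infty$ this is the Hardy--Littlewood--Wiener theorem, while for $p=\infty$ it is the trivial bound $\|Mf\|_{L_\i(\rn)} \leq \|f\|_{L_\i(\rn)}$. Iterating gives
$$
\|M^2 f\|_{L_p(\rn)} \leq \|M\|^2_{L_p(\rn)\to L_p(\rn)}\, \|f\|_{L_p(\rn)},
$$
and combining this with the displayed pointwise estimate above produces exactly the claimed inequality with constant $C=C_0\,\|M\|^2_{L_p(\rn)\to L_p(\rn)}$ depending only on $n$ and $p$.

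There is really no essential obstacle: all the substantive work is packaged into Theorem \ref{lem0003.3}, and the remaining step is the standard iteration of the maximal theorem. The only mildly delicate point is the endpoint $p=\i$, which is handled at once by the trivial $L_\i$ bound on $M$; otherwise the argument is purely mechanical and uniform across the range $1<p\leq \i$.
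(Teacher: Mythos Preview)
Your proposal is correct and follows exactly the approach the paper has in mind: the corollary is stated as an immediate consequence of the pointwise estimate \eqref{eqPointwise3} from Theorem \ref{lem0003.3}, and one then simply iterates the $L_p$-boundedness of $M$ for $1<p\le\infty$. The paper does not spell out any further details beyond this, so your write-up already contains more than the paper's own treatment.
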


\section{Endpoint estimates for commutators of maximal
function}\label{sect6}

In this section we prove endpoint estimates for the commutators of
the maximal function with $\B$ functions.
\begin{ex}\label{ex4.7}
We show that $[M,b]$ fails to be of weak type $(1,1)$. Consider the
$\B$ function $b(x)=\log |1+x|$ and let $f(x)=\chi_{(0,1)}(x)$. It
is easy to see that for any $x<0$
$$
Mf(x)=\sup_{0<t<1}\frac{t}{t-x}=\frac{1}{1-x}.
$$
On the other hand, for any $x<0$
\begin{equation*}
\begin{split}
M(bf)(x)=\sup_{0<t<1}\frac{\int_0^t
\log|1+y|dy}{t-x}=\sup_{0<t<1}\frac{(1+t)\log
(1+t)-t}{t-x}=\frac{2\log 2-1}{1-x}.
\end{split}
\end{equation*}
Thus
$$
[M,b]f(x)=\frac{2\log 2-1}{1-x}-\frac{\log|1+x|}{1-x}.
$$
There is $\e_0>0$ such that for any $x<-\e_0$
$$
\log|1+x|-(2\log 2-1)>\frac{1}{2}\log|x|.
$$
Therefore, for any $\la >0$,
\begin{equation*}
\begin{split}
\la |\{x\in \R: |[M,b]f(x)|>\la\}|&\geq \la \left|\left\{x<0:
\left|\frac{2\log
2-1}{1-x}-\frac{\log|1+x|}{1-x}\right|>\la\right\}\right| \\
&\geq \la \left|\left\{x<-\e_0:
\frac{1}{2}\frac{\log|x|}{1-x}>\la\right\}\right|\\
&\geq \la \left|\left\{x<-\max\{e,\e_0\}:
\frac{1}{2}\frac{\log|x|}{|x|}>\la\right\}\right|=\\
&= \la (\vp^{-1}(-\max\{e,\e_0\})-\vp^{-1}(2\la)),
\end{split}
\end{equation*}
where $\vp$ is the increasing function $\vp :(-\i, -e)\rightarrow
(0,e^{-1})$, given by $\vp(x)=\log |x|/|x|$. To conclude observe
that the right hand side of the estimate is unbounded as $\la
\rightarrow 0$:
$$
\lim_{\la\rightarrow 0}\la \vp^{-1}(\la)=\lim_{\la\rightarrow \i}\la
\vp(\la)=\i.
$$
\end{ex}

\begin{thm}\label{thm3495195187t}
Let $b$ is in $\B(\rn)$ such that $b^-\in L_{\i}(\rn)$. Then,
there exists a positive constant $C$ such that inequality
\begin{align}
|\{x \in\rn : |[M,b] f(x)|  > \la \}| & \notag\\
& \hspace{-2cm}\leq C C_0\left(1+\log^+
C_0\right)\int_{\rn}\frac{|f(x)|}{\la}\left(1+\log^+
\left(\frac{|f(x)|}{\la}\right)\right)dx, \label{weak}
\end{align}
holds for all $f\in L\left(1+\log^+ L\right)$ and $\la >0$, where
$C_0=\|b^+\|_{*}+\|b^-\|_{\i}$.
\end{thm}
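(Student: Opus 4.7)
The plan is to reduce the weak-type $L(1+\log^+ L)$ estimate for $[M,b]$ to the corresponding estimate for the iterated maximal function $M^2f$, imitating exactly the template used in the sufficient direction of Theorem \ref{mainthm}. All the analytic work has already been done: Theorem \ref{lem0003.3} provides the pointwise majorization and Lemma \ref{lem002.8} provides the distributional bound for $M^2$.

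First, I would invoke Theorem \ref{lem0003.3} to obtain an absolute constant $C>0$ such that
$$|[M,b]f(x)| \leq C\,C_0\, M^2f(x), \qquad x\in\rn,$$
where $C_0=\|b^+\|_*+\|b^-\|_\infty$. This inclusion of pointwise sizes immediately yields, for every $\lambda>0$,
$$\{x\in\rn: |[M,b]f(x)|>\lambda\}\subseteq \{x\in\rn: M^2f(x)>\lambda/(C\,C_0)\},$$
and consequently the Lebesgue measure of the left set is bounded by the Lebesgue measure of the right set.

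Second, I would apply Lemma \ref{lem002.8} with threshold $\lambda/(C\,C_0)$, giving
$$|\{x : M^2f(x)>\lambda/(C\,C_0)\}|\lesssim \int_{\rn}\frac{C\,C_0\,|f(x)|}{\lambda}\left(1+\log^+\frac{C\,C_0\,|f(x)|}{\lambda}\right)dx.$$
Then I would use the submultiplicative inequality \eqref{log} with $a=C\,C_0$ and $b=|f(x)|/\lambda$ to separate the factors,
$$1+\log^+\frac{C\,C_0\,|f(x)|}{\lambda} \leq \bigl(1+\log^+(C\,C_0)\bigr)\left(1+\log^+\frac{|f(x)|}{\lambda}\right),$$
and pull the $C\,C_0(1+\log^+(C\,C_0))$ prefactor outside the integral. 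Since $C$ is an absolute constant, $C\,C_0\bigl(1+\log^+(C\,C_0)\bigr)\lesssim C_0\bigl(1+\log^+ C_0\bigr)$, and \eqref{weak} follows.

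There is no substantive obstacle: the heavy lifting lies entirely in Theorem \ref{main} (which underlies Theorem \ref{lem0003.3}) and in the already-cited Lemma \ref{lem002.8}. The only point requiring mild care is the bookkeeping of the constants so that the final prefactor comes out precisely as $C_0(1+\log^+ C_0)$ rather than a larger expression; for this the elementary inequality \eqref{log} is exactly the right tool, as it was in the analogous argument for $C_b$ in the proof of Theorem \ref{mainthm}.
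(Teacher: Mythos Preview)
Your proof is correct, and the route is slightly more economical than the paper's. The paper does not invoke Theorem~\ref{lem0003.3} directly; instead it returns to the pointwise decomposition of Lemma~\ref{lem00004.2},
\[
|[M,b]f(x)|\le C_b(f)(x)+2b^-(x)Mf(x),
\]
splits the superlevel set accordingly, bounds the $C_b$ piece via Theorem~\ref{mainthm} (hence ultimately via $M^2$ and Lemma~\ref{lem002.8}), and bounds the $b^-Mf$ piece separately using $\|b^-\|_\infty$ and the weak-$(1,1)$ estimate for $M$. Your approach collapses both pieces into a single application of Lemma~\ref{lem002.8} by using the already-packaged pointwise bound $|[M,b]f|\lesssim C_0\,M^2f$; this is cleaner and yields the same constant $C_0(1+\log^+ C_0)$. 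The underlying ingredients are identical---both routes ultimately rest on Theorem~\ref{lem1111111} and Lemma~\ref{lem002.8}---so the difference is one of packaging rather than substance.
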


\begin{proof}
By Lemma \ref{lem00004.2}, we have
\begin{equation*}
\begin{split}
|\{x  \in\rn :  |[M,b] & f(x)|> \la \}| \\
&\hspace{-1.5cm}\leq \left|\left\{x  \in\rn :  C_b(f)(x)>
\frac{\la}{2} \right\}\right|+\left|\left\{x  \in\rn :  |2b^-|Mf(x)>
\frac{\la}{2} \right\}\right|\\
&\hspace{-1.5cm}\leq \left|\left\{x  \in\rn :  C_b(f)(x)>
\frac{\la}{2} \right\}\right|+\left|\left\{x  \in\rn :
2\|b^-\|_{\i}Mf(x)> \frac{\la}{2} \right\}\right|.
\end{split}
\end{equation*}
By Theorem \ref{mainthm}, using the inequality \eqref{log}, we
have
\begin{align}
\left|\left\{x  \in\rn :  C_b(f)(x)> \frac{\la}{2} \right\}\right|
& \notag\\
& \hspace{-2.5cm} \leq C C_0\left(1+\log^+
C_0\right)\int_{\rn}\frac{|f(x)|}{\la}\left(1+\log^+
\left(\frac{|f(x)|}{\la}\right)\right)dx. \label{weak1}
\end{align}
On the other hand, since the maximal operator $M$ is a weak type
(1,1), we get
\begin{equation}\label{weak2}
\left|\left\{x  \in\rn : 2\|b^-\|_{\i}\,Mf(x)> \frac{\la}{2}
\right\}\right| \leq C \|b^-\|_{\i}\int_{\rn}\frac{|f(x)|}{\la}dx.
\end{equation}
Combining \eqref{weak1} and \eqref{weak2}, we get \eqref{weak}.

\end{proof}

\begin{rem}
Unfortunately, in Theorem \ref{thm3495195187t} we have only
sufficient part, and we are not able to prove that the condition
$b\in \B (\rn)$ is also necessary for inequality \eqref{weak} to
hold.
\end{rem}


\begin{bibdiv}
\begin{biblist}

\bib{Alphonse}{article}{
   author={Alphonse, A. M.},
   title={An end point estimate for maximal commutators},
   journal={J. Fourier Anal. Appl.},
   volume={6},
   date={2000},
   number={4},
   pages={449--456},
   issn={1069-5869},
   review={\MR{1777383 (2001i:42019)}},
}

\bib{BasMilRu}{article}{
   author={Bastero, J.},
   author={Milman, M.},
   author={Ruiz, Francisco J.},
   title={Commutators for the maximal and sharp functions},
   journal={Proc. Amer. Math. Soc.},
   volume={128},
   date={2000},
   number={11},
   pages={3329--3334 (electronic)},
   issn={0002-9939},
   review={\MR{1777580 (2001i:42027)}},
}

\bib{BenSharp}{article}{
   author={Bennett, C.},
   author={Sharpley, R.},
   title={Weak-type inequalities for $H^{p}$ and BMO},
   conference={
      title={Harmonic analysis in Euclidean spaces (Proc. Sympos. Pure
      Math., Williams Coll., Williamstown, Mass., 1978), Part 1},
   },
   book={
      series={Proc. Sympos. Pure Math., XXXV, Part},
      publisher={Amer. Math. Soc.},
      place={Providence, R.I.},
   },
   date={1979},
   pages={201--229},
   review={\MR{545259 (80j:46044)}},
}

\bib{bijz}{article}{
   author={Bonami, A.},
   author={Iwaniec, T.},
   author={Jones, P.},
   author={Zinsmeister, M.},
   title={On the product of functions in BMO and $H^1$},
   journal={Ann. Inst. Fourier (Grenoble)},
   volume={57},
   date={2007},
   number={5},
   pages={1405--1439},
   issn={0373-0956},
   review={\MR{2364134 (2009d:42054)}},
}
\bib{BS}{book}{
   author={Bennett, C.},
   author={Sharpley, R.},
   title={Interpolation of operators},
   series={Pure and Applied Mathematics},
   volume={129},
   publisher={Academic Press Inc.},
   place={Boston, MA},
   date={1988},
   pages={xiv+469},
   isbn={0-12-088730-4},
   review={\MR{928802 (89e:46001)}},
}

\bib{CRW}{article}{
   author={Coifman, R. R.},
   author={Rochberg, R.},
   author={Weiss, G.},
   title={Factorization theorems for Hardy spaces in several variables},
   journal={Ann. of Math. (2)},
   volume={103},
   date={1976},
   number={3},
   pages={611--635},
   issn={0003-486X},
   review={\MR{0412721 (54 \#843)}},
}

\bib{GR}{book}{
   author={Garcia-Cuerva, J.},
   author={Rubio de Francia, J.L.},
   title={Weighted norm inequalities and related topics},
   series={North-Holland Mathematics Studies},
   volume={116},
   note={Notas de Matem\'atica [Mathematical Notes], 104},
   publisher={North-Holland Publishing Co.},
   place={Amsterdam},
   date={1985},
   pages={x+604},
}

\bib{GHST}{article}{
   author={Garcia-Cuerva, J.},
   author={Harboure, E.},
   author={Segovia, C.},
   author={Torrea, J. L.},
   title={Weighted norm inequalities for commutators of strongly singular
   integrals},
   journal={Indiana Univ. Math. J.},
   volume={40},
   date={1991},
   number={4},
   pages={1397--1420},
   issn={0022-2518},
   review={\MR{1142721 (93f:42031)}},
}

\bib{graf}{book}{
   author={Grafakos, L.},
   title={Modern Fourier analysis},
   series={Graduate Texts in Mathematics},
   volume={250},
   edition={2},
   publisher={Springer},
   place={New York},
   date={2009},
   pages={xvi+504},
   isbn={978-0-387-09433-5},
   review={\MR{2463316 (2011d:42001)}},
}

\bib{HuYang}{article}{
   author={Hu, G.},
   author={Yang, D.},
   title={Maximal commutators of BMO functions and singular integral
   operators with non-smooth kernels on spaces of homogeneous type},
   journal={J. Math. Anal. Appl.},
   volume={354},
   date={2009},
   number={1},
   pages={249--262},
   issn={0022-247X},
   review={\MR{2510436 (2010c:43018)}},
}

\bib{j}{article}{
   author={Janson, S.},
   title={Mean oscillation and commutators of singular integral operators},
   journal={Ark. Mat.},
   volume={16},
   date={1978},
   number={2},
   pages={263--270},
   issn={0004-2080},
   review={\MR{524754 (80j:42034)}},
}

\bib{JN}{article}{
   author={John, F.},
   author={Nirenberg, L.},
   title={On functions of bounded mean oscillation},
   journal={Comm. Pure Appl. Math.},
   volume={14},
   date={1961},
   pages={415--426},
   issn={0010-3640},
   review={\MR{0131498 (24 \#A1348)}},
}

\bib{LiHuShi}{article}{
   author={Li, D.},
   author={Hu, G.},
   author={Shi, X.},
   title={Weighted norm inequalities for the maximal commutators of singular
   integral operators},
   journal={J. Math. Anal. Appl.},
   volume={319},
   date={2006},
   number={2},
   pages={509--521},
   issn={0022-247X},
   review={\MR{2227920 (2007a:42041)}},
}

\bib{MilSchon}{article}{
   author={Milman, M.},
   author={Schonbek, T.},
   title={Second order estimates in interpolation theory and applications},
   journal={Proc. Amer. Math. Soc.},
   volume={110},
   date={1990},
   number={4},
   pages={961--969},
   issn={0002-9939},
   review={\MR{1075187 (91k:46088)}},
}

\bib{CPer}{article}{
   author={Perez, C.},
   title={Endpoint estimates for commutators of singular integral operators},
   journal={J. Funct. Anal.},
   volume={128},
   date={1995},
   number={1},
   pages={163--185},
   issn={0022-1236},
   review={\MR{1317714 (95j:42011)}},
}

\bib{RR}{book}{
   author={Rao, M. M.},
   author={Ren, Z. D.},
   title={Theory of Orlicz spaces},
   series={Monographs and Textbooks in Pure and Applied Mathematics},
   volume={146},
   publisher={Marcel Dekker Inc.},
   place={New York},
   date={1991},
   pages={xii+449},
   isbn={0-8247-8478-2},
   review={\MR{1113700 (92e:46059)}},
}

\bib{ST1}{article}{
   author={Segovia, C.},
   author={Torrea, J.L.},
   title={Weighted inequalities for commutators of fractional and singular
   integrals},
   note={Conference on Mathematical Analysis (El Escorial, 1989)},
   journal={Publ. Mat.},
   volume={35},
   date={1991},
   number={1},
   pages={209--235},
   issn={0214-1493},
   review={\MR{1103616 (93f:42035)}},
}

 \bib{ST2}{article}{
   author={Segovia, C.},
   author={Torrea, J.L.},
   title={Higher order commutators for vector-valued Calder\'on-Zygmund
   operators},
   journal={Trans. Amer. Math. Soc.},
   volume={336},
   date={1993},
   number={2},
   pages={537--556},
   issn={0002-9947},
   review={\MR{1074151 (93f:42036)}},
}

\end{biblist}
\end{bibdiv}

\vspace{1cm}

%
%
%
%
%
%

\

Mujdat Agcayazi \\
Department of Mathematics, Faculty of Science and Arts, Kirikkale
University, 71450 Yahsihan, Kirikkale, Turkey \\
E-mail: mujdatagcayazi@yahoo.com

\

Amiran Gogatishvili\\
Institute of Mathematics of the Academy of Sciences of the Czech
Republic, \'Zitna~25,  115 67 Prague 1, Czech Republic \\
E-mail: gogatish@math.cas.cz

\

Kerim Koca\\
Department of Mathematics, Faculty of Science and Arts, Kirikkale
University, 71450 Yahsihan, Kirikkale, Turkey\\
E-mail: kerimkoca@gmail.com

\

Rza Mustafayev\\
Department of Mathematics, Faculty of Science and Arts, Kirikkale
University, 71450 Yahsihan, Kirikkale, Turkey \\
E-mail: rzamustafayev@gmail.com
%

\end{document}